\documentclass[12pt]{amsart}
\usepackage{amsmath,amsthm,amsfonts,amscd,amssymb,eucal,latexsym,mathrsfs, appendix, yhmath, setspace}
\usepackage{color,colortbl}
\usepackage[numbers,sort&compress]{natbib}
\usepackage[all,cmtip]{xy}
\usepackage{enumerate}

\setlength{\textwidth}{15cm}
\setlength{\oddsidemargin}{4mm}
\setlength{\evensidemargin}{4mm}

\newtheorem{theorem}{Theorem}[section]
\newtheorem{corollary}[theorem]{Corollary}
\newtheorem{lemma}[theorem]{Lemma}
\newtheorem{proposition}[theorem]{Proposition}

\theoremstyle{definition}
\newtheorem{definition}[theorem]{Definition}
\newtheorem{remark}[theorem]{Remark}

\DeclareMathOperator{\Ima}{im \ }
 \newcommand{\Ker}{{\rm ker \ }}

\newcommand{\hso}{ h_{\Sigma, \omega}}

 \newcommand{\Map}{{\rm Map}}

\newcommand{\rL}{{\rm L}}
\newcommand{\Sym}{{\rm Sym}}

  \newcommand{\cF}{{\mathcal F}}
  \newcommand{\cJ}{{\mathcal J}}
  
 \newcommand{\cM}{{\mathcal M}}
 \newcommand{\cN}{{\mathcal N}}

    \newcommand {\cW}{{\mathcal W}}

  \newcommand{\bF}{{\mathbb F}}
 \newcommand{\bN}{{\mathbb N}}

 \newcommand{\bR}{{\mathbb R}}
 
 \newcommand{\bZ}{{\mathbb Z}}
  
 \newcommand{\ZG}{{\mathbb Z \Gamma}}

 \newcommand{\hcM}{\widehat{\mathcal M}}

 \newcommand{\sA}{{\mathscr A}}
\newcommand{\sB}{{\mathscr B}}

\newcommand{\sF}{{\mathscr F}}

\newcommand{\sM}{{\mathscr M}}
\newcommand{\sN}{{\mathscr N}}

\allowdisplaybreaks

\begin{document}

\title{Entropy on modules over the group ring of a sofic group }

\author{Bingbing Liang}

\address{\hskip-\parindent
B.L., Max Planck Institute for Mathematics, Vivatsgasse 7, 53111, Bonn, Germany }
\email{bliang@mpim-bonn.mpg.de}
\email{bbliang2008@163.com}

\subjclass[2010]{Primary 37B99, 16D10, 13C12.}
\keywords{Sofic group, algebraic entropy, topological entropy, zero-divisor conjecture}

\date{September 5, 2017}

\begin{abstract}
We partially generalize Peters' formula \cite{Peters79} on modules over the group ring $\bF \Gamma$ for a given finite field $\bF$ and a sofic group $\Gamma$. It is also discussed that how the values of entropy are related to the zero divisor conjecture.
\end{abstract}

\maketitle

\section{Introduction} 

Let $\Gamma$ be a countable discrete group. An {\it algebraic action} of $\Gamma$ is the induced (continuous) action $\Gamma \curvearrowright \hcM$ from some (left) $\ZG$-module $\cM$. Here $\widehat{\cM}$ is the Pontryagin dual of the discrete abelian group $\cM$. One of main concerns is about the interplay between the dynamical information of $\Gamma \curvearrowright \hcM$ and the algebraic information of $\cM$. Weiss considered the case that $\cM$ is a torsion abelian group \cite{Weiss74}. For general $\ZG$-module $\cM$, Peters showed that the topological entropy of $\Gamma \curvearrowright \hcM$ coincides with the algebraic entropy of $\cM$  when $\Gamma$ is amenable \cite{Peters79}.  This general result does not only establish the beautiful connection but also provides the great flexibility to study the entropy of algebraic actions \cite{CL15, LT14}.

Towards more general group actions, Bowen and Kerr-Li developed an entropy theory when $\Gamma$ can be approximated by finite groups \cite{Bowen10, KL11}. The groups  admitting such approximations are the so-called {\it sofic groups}, which in particular includes amenable groups and residually groups \cite{Gromov99S, Weiss99}.

On the other hand, given a unital ring $R$ and a sofic group $\Gamma$, in \cite[Definition 3.1]{LL15A}, started with a length function $\rL$ on left $R$-modules (\cite[Definition 2.1]{LL15A}), Li and the author introduced a mean length function  on  $R\Gamma$-modules $\cM$ which are {\it locally $L$-finite} in the sense that each finitely generated $R$-submodule of $\cM$ has finite $\rL$-length. In particular, for $R=\bZ$ and $\rL(\cdot)=\log|\cdot|$, the corresponding mean length function can be treated as an algebraic version of entropy, which we may call {\it sofic algebraic entropy}. Then a natural question to ask is whether one can generalize Peters' formula to the setting of sofic group actions.

In this paper, we give a partial generalization of Peters' formula to the case of sofic group actions. The main inputs of the proof are an approximation formula of topological entropy given in \cite[Lemma 4.8]{GS15A} and some techniques appeared in \cite{LL15A}. Given  a finite field $\bF$ any $\bF \Gamma$-module can be treated as a $\ZG$-module. Fix a sofic approximation $\Sigma$ for $\Gamma$ and a free ultrafilter $\omega$ over $\bN$. For any left $\bF \Gamma$-modules $\cM_1 \subseteq \cM_2$, we have the algebraic entropy of $\cM_1$ relative to $\cM_2$, denoted by $\hso(\cM_1|\cM_2)$ (Definition \ref{sofic algebraic entropy}). For any $\bF \Gamma$-module $\cM$, the sofic algebraic entropy $\hso(\cM)$ of $\cM$ is then defined as $\hso(\cM|\cM)$. We remark that the main motivation to introduce this relative version of invariants and technical consideration of ultrafilters is that one can establish a modified addition formula under this approach \cite[Theorem 1.1]{LL15A}.

Correspondingly, we have the relative version of topological entropy in dynamical systems. Let $\Gamma$ act on compact metrizable spaces $X$ and $Y$ continuously. For any factor map from $X$ to $Y$, we have the topological entropy $\hso(Y|X)$ of $\Gamma \curvearrowright Y$ relative to $\Gamma \curvearrowright X$ \cite[Definition 9.3]{LL15A}. When $X=Y$ and the factor map is the identity map, the relative topological entropy coincide with the original definition of sofic topological entropy, which we denote by $\hso(X)$. Similar approaches also independently appears in the works of other experts. Hayes gave a formula for the relative version of sofic measure entropy in terms of a given compact model in \cite{Hayes15}. A similar notion for Rokhlin entropy, called outer Rokhlin entropy, was developed by Seward in  \cite{SewardII}.

\begin{theorem} \label{main theorem}
Let $\Gamma$ be a sofic group and $\bF$ be a finite field. Then for any finitely generated $\bF \Gamma$-module $\cM$, we have
$$h_{\Sigma, \omega}(\hcM)=h_{\Sigma, \omega}(\cM).$$
Let $\cM_1 \subseteq \cM_2$ be countable $\bF \Gamma$-modules. Then $h_{\Sigma, \omega}(\hcM_1|\hcM_2)\leq h_{\Sigma, \omega}(\cM_1|\cM_2).$
\end{theorem}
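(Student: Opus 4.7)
\emph{Proof plan.} Set $q=|\bF|$. The plan is to handle the finitely generated case (which yields the equality) by a direct duality calculation on sofic approximations, and then to deduce the countable-case inequality via an exhaustion argument together with monotonicity of the relative entropies.

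In the finitely generated case, I would present $\cM=(\bF\Gamma)^n/\sN$ for some $\bF\Gamma$-submodule $\sN\subseteq(\bF\Gamma)^n$. Since $\bF$ is a finite abelian group it is Pontryagin self-dual, so $\widehat{(\bF\Gamma)^n}\cong\bF^{\Gamma\times\{1,\dots,n\}}$ with the natural $\Gamma$-shift, and $\hcM$ appears as the closed $\Gamma$-invariant subshift orthogonal to $\sN$ under the canonical pairing. The approximation formula of \cite[Lemma 4.8]{GS15A} along the sofic approximation $\Sigma=(\sigma_i\colon\Gamma\to\Sym(d_i))$ then gives
$$h_{\Sigma,\omega}(\hcM)=\lim_{i\to\omega}\frac{1}{d_i}\log N_i,$$
where $N_i$ counts those $x\in\bF^{d_i\times n}$ that approximately annihilate a fixed finite subset of $\sN$ after $\sigma_i$-twisted evaluation. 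Since all the relations are $\bF$-linear, the counted set is an $\bF$-affine subspace of $\bF^{d_i\times n}$, so $N_i=q^{D_i}$ for some $D_i\in\bN$. Unwinding the mean length construction of \cite[Definition 3.1]{LL15A} with $\rL=\log|\cdot|$ and using $|\cF|=q^{\dim_\bF\cF}$ collapses $h_{\Sigma,\omega}(\cM)$ to $\log q$ times a sequence of codimensions in $\bF^{d_i\times n}$; $\bF$-linear duality between the affine space of approximate annihilators of $\sN$ modulo $\sigma_i$ and the quotient of $\bF^{d_i\times n}$ by the $\bF$-span of the $\sigma_i$-image of $\sN$ identifies these two dimensions, yielding $h_{\Sigma,\omega}(\hcM)=h_{\Sigma,\omega}(\cM)$.

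For the countable case, write $\cM_1=\bigcup_k\cN_k$ as a directed union of finitely generated $\bF\Gamma$-submodules, so that $\hcM_1=\varprojlim\widehat{\cN_k}$, and use continuity of the relative sofic topological entropy along inverse limits together with continuity of the algebraic mean length along directed unions (as developed in \cite{LL15A}) to reduce to proving, for each $k$, the bound
$$h_{\Sigma,\omega}(\widehat{\cN_k}|\hcM_2)\leq h_{\Sigma,\omega}(\cN_k|\cM_2).$$
This follows from the same duality calculation as in the finitely generated case, except that the algebraic microstate count is now taken relative to the ambient $\cM_2$: the topologically approximate side is still bounded above by the $\bF$-linear exact count from the algebraic side, but the reverse bound can fail, producing only an inequality.

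\emph{Main obstacle.} The core technical challenge is the passage between the approximate-annihilation condition intrinsic to the sofic topological entropy via \cite[Lemma 4.8]{GS15A} and the exact $\bF$-linear counting intrinsic to the algebraic entropy. Quantifying that the codimensions of the approximate and exact annihilators differ by $o(d_i)$ uniformly along $\omega$, with the dependence on the finite generating data of $\sN$ controlled, is where the techniques from \cite{LL15A} are indispensable; it is also the reason that finite generation is needed to close the gap to an equality in the first statement.
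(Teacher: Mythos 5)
Your treatment of the first assertion (the equality for finitely generated $\cM$) is essentially the paper's argument: reduce to a finite presentation $\cM=(\bF\Gamma)^n/(\bF\Gamma)^m f$, invoke \cite[Lemma 4.8]{GS15A} to express $\hso(\hcM)$ as $\varlimsup_i \frac{1}{d_i}\log|\Ker\bar\sigma_i(f^\ast)|$, express $\hso(\cM)$ via the addition formula and \cite[Proposition 6.1]{LL15A} as $\varlimsup_i \frac{1}{d_i}\log|\Ker\sigma_i(f)|$, and match the two by $\bF$-linear duality. One clarification on your ``main obstacle'': the approximate-versus-exact issue is already absorbed into \cite[Lemma 4.8]{GS15A}, which outputs an exact kernel count; the residual discrepancy the paper must control is between $\sigma_i(f)$ and $\bar\sigma_i(f^\ast)$ (i.e., between $\sigma_i(s^{-1})$ and $\sigma_i(s)^{-1}$), handled by restricting to the set $\cW$ of coordinates where these agree, whose complement is $o(d_i)$ by soficity. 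Also note that a general finitely generated $\cM$ need not be finitely presented, so one must additionally exhaust the relation module by finitely generated submodules and use continuity on both sides; your sketch implicitly assumes $\sN$ is finitely generated.

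For the second assertion there is a genuine gap. You reduce to $\hso(\widehat{\cN_k}|\widehat{\cM_2})\le\hso(\cN_k|\cM_2)$ and assert this ``follows from the same duality calculation,'' but it does not: the relative sofic topological entropy counts $\varepsilon$-separated images, under the factor map $\widehat{\cM_2}\to\widehat{\cN_k}$, of \emph{approximately} equivariant microstates into $\widehat{\cM_2}$, and these form only an approximate subgroup of $(\bF^{d_i})^{n}$; there is no ready-made linear-algebra duality pairing this count against $|\sM(\sA,\sB,F,\sigma_i)|$, and carrying one out would be a substantial new argument, not a rerun of the absolute case. The paper avoids this entirely: for finitely generated $\cM_1\subseteq\cM_2$ it combines the inequality $\hso(X)\ge\hso(X/Y|X)+\hso(Y)$ (Lemma \ref{inequality}, applied to $X=\widehat{\cM_2}$, $Y=\widehat{\cM_2/\cM_1}$, $X/Y=\widehat{\cM_1}$) with the algebraic addition formula $\hso(\cM_2)=\hso(\cM_1|\cM_2)+\hso(\cM_2/\cM_1)$ (Theorem \ref{addition}) and the already-proved equality applied to $\cM_2$ and $\cM_2/\cM_1$, yielding $\hso(\widehat{\cM_1}|\widehat{\cM_2})\le\hso(\widehat{\cM_2})-\hso(\widehat{\cM_2/\cM_1})=\hso(\cM_2)-\hso(\cM_2/\cM_1)=\hso(\cM_1|\cM_2)$. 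This is the missing idea in your plan; it also explains why the paper needs a second exhaustion step (over finitely generated submodules of $\cM_2$ containing $\cM_1$, using Proposition \ref{property}) before the final exhaustion of $\cM_1$, since the addition-formula route requires both $\cM_2$ and $\cM_2/\cM_1$ to be finitely generated.
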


This paper is organized as follows. We recall some background knowledge in Section 2. The algebraic entropy is introduced  in Section 3 and some basic properties are discussed there. In particular, we show that the values of algebraic entropies is related to the zero divisor conjecture. We prove the main result in Section 4.

Throughout this paper, $\Gamma$ will be a countable discrete group. For any set $S$, we denote by $\cF(S)$ the set of all nonempty finite subsets of $S$. All modules are assumed to be left modules unless specified. For any $d \in \bN$, we write $[d]$ for the set $\{1, \cdots, d\}$ and $\Sym(d)$ for the permutation group of $[d]$.

\noindent{\it Acknowledgements.}
We thank the helpful comments from Yongle Jiang and Hanfeng Li.

\section{Preliminaries}

\subsection{Group rings and algebraic actions}
Let $R$ be a unital ring. The {\it group ring of $\Gamma$ with coefficients in $R$}, denoted by $R \Gamma$, consists of all finitely supported functions: $f: \Gamma \to R$. Conventionally, we shall write $f$ as $\sum_{s \in \Gamma} f_s s$, where $f_s \in R$ for all $s \in \Gamma$ and $f_s=0$ for all except finitely many $s \in \Gamma$. 
The algebraic operations on $R \Gamma$ are defined by
$$ \sum_{s\in \Gamma}f_ss+\sum_{s\in \Gamma}g_ss=\sum_{s\in \Gamma}(f_s+g_s)s, \mbox{ and } \big(\sum_{s\in \Gamma}f_s s\big)\big(\sum_{t\in \Gamma}g_tt\big)=\sum_{s, t\in \Gamma}f_sg_t(st).$$
We similarly have the product if one of $f$ and $g$ sits in $R^\Gamma$.

For any countable $\ZG$-module $\cM$, treated as a discrete abelian group, its Pontryagin dual $\hcM$ consisting of all continuous group homomorphisms $\cM \to \bR/\bZ$,  coincides with ${\rm Hom}_{\bZ}(\cM, \bR/\bZ)$. By Pontryagin duality, $\hcM$ is a compact metrizable space under compact-open topology. Furthermore, the $\ZG$-module structure of $\cM$ naturally induces an adjoint action $\Gamma \curvearrowright \hcM$ by continuous automorphisms. To be precise,
$$\langle s\chi, u\rangle: =\langle \chi, s^{-1}x \rangle$$
for all $\chi \in \hcM, u \in \cM$, and $s \in \Gamma$.
Conversely, by Pontryagin duality, each action of $\Gamma$ on a compact metrizable abelian group arise this way and thus we call such a dynamical system an  {\it algebraic action} \cite{Schmidt95}.

\subsection{Amenable and sofic groups}
The group $\Gamma$ is called {\it amenable} if for any $K\in \cF(\Gamma)$ and any $\delta>0$ there is an $F\in \cF(\Gamma)$ with $|KF\setminus F|<\delta |F|$.

A sequence of maps $\Sigma=\{\sigma_i: \Gamma \to {\rm \Sym} (d_i)\}_{i \in \bN}$ is called a {\it sofic approximation} for $\Gamma$  if it satisfies:
\begin{enumerate}
\item $\lim_{i\to \infty}|\{v\in [d_i]: \sigma_{i,s}\sigma_{i,t}(v)=\sigma_{i, st}(v)\}|/d_i=1$ for all $s, t\in \Gamma$,

\item $\lim_{i\to \infty}|\{v\in [d_i]: \sigma_{i, s}(v)\neq \sigma_{i,t}(v)\}|/d_i=1$ for all distinct $s, t\in \Gamma$,

\item $\lim_{i\to \infty} d_i=+\infty$.
\end{enumerate}
The group $\Gamma$ is called a {\it sofic group} if it admits a sofic approximation.

Any amenable group is sofic since one can use a sequence of asymptotically-invariant subsets of the amenable group, i.e. {\it F{\o}lner sequence},  to construct a sofic approximation. Residually finite groups are also sofic since a sequence of exhausting finite-index subgroups naturally induces a sofic approximation in which each approximating map is actually a group homomorphism. We refer the reader to \cite{CL15B, CC10B} for more information on sofic groups.

Throughout the rest of this paper, $\Gamma$ will be a countable sofic group,  $\Sigma$ will be a sofic approximation for $\Gamma$, and $\omega$ will be a free ultrafilter over $\bN$.

\subsection{Sofic topological entropy} 
For any continuous pseudometric $\rho_X$ on a compact space $X$ and any $\varepsilon >0$, a subset $Z$ of $X$ is called {\it ($\rho_X, \varepsilon$)-separated} if $\rho_X(z_1, z_2) > \varepsilon$ for all distinct $z_1, z_2 \in Z$. Set $N_\varepsilon(X, \rho_X):=\max_Z|Z|$ for $Z$ ranging over all $(\rho_X, \varepsilon)$-separated subsets of $X$.

Let $\Gamma$ act continuously on a compact metrizable space $X$.

\begin{definition}
Let $\rho_X$ be a continuous pseudometric on $X$. For any $d \in \bN$, define continuous pseudometrics $\rho_{X, 2}$ and $\rho_{X, \infty}$ on $X^d$ by
$$\rho_{X, 2}(\varphi, \psi)=\left(\frac{1}{d}\sum_{v \in [d]} \rho_X(\varphi_v, \psi_v)^2\right)^{1/2}, {\rm and \ } \rho_{X, \infty}(\varphi, \psi)=\max_{v \in [d]} \rho_X(\varphi_v, \psi_v).$$
Let $\sigma$ be a map from $\Gamma$ to $\Sym(d)$, $F \in \cF(\Gamma)$, and $\delta > 0$. The set of approximately equivariant maps $\Map(\rho, F, \delta, \sigma)$ is defined to be the set of all maps $\varphi: [d] \to X$ such that $\rho_{X, 2}(s\varphi, \varphi \circ \sigma(s)) \leq \delta$.

A continuous pseudometric $\rho_X$ on $X$ is called {\it dynamically generating} if 
$\rho_X$ can distinguish all distinct elements of  $X$ under the action of $\Gamma$, i.e.  for all distinct $x, x' \in X$, there exists some $s \in \Gamma$ such that $\rho_X(sx, sx')> 0$.
\end{definition}

Now let $\Gamma$ act on another compact metrizable space $Y$ and $\pi: X \to Y$ be a factor map. Denote by $\Map(\pi, \rho, F, \delta, \sigma)$ the set of all $\pi \circ \varphi$ for $\varphi$ ranging in $\Map(\rho, F, \delta, \sigma)$.

\begin{definition}
Let $\rho_X$ and $\rho_Y$ be two dynamically generating continuous pseudometrics of $X$ and $Y$ respectively. Let $F \in \cF(\Gamma)$ and $\delta, \varepsilon > 0$. Define
$$h^\varepsilon_{\Sigma, \omega, \infty}(\rho_Y, F, \delta|\rho_X)=\lim_{i\to \omega}\frac{1}{d_i}\log N_\varepsilon(\Map(\pi, \rho_X, F, \delta, \sigma_i), \rho_{Y, \infty}).$$
If the set of $i\in \bN$ with $\Map(\rho_X, F, \delta, \sigma_i)=\emptyset$ is in $\omega$, we set $h^\varepsilon_{\Sigma, \omega, \infty}(\rho_Y, F, \delta|\rho_X)=-\infty$. We define the {\it sofic topological entropy of $\Gamma \curvearrowright Y$ relative to $\Gamma \curvearrowright X$} as
$$\hso(Y|X): =\sup_{\varepsilon > 0} \inf_{F \in \cF(\Gamma)} \inf_{\delta >0}h^\varepsilon_{\Sigma, \omega, \infty}(\rho_Y, F, \delta|\rho_X). $$
The {\it sofic topological entropy of $\Gamma \curvearrowright X$} is defined as 
$\hso(X):=\hso(X|X)$
for $\pi: X \to X$ being the identity map.
\end{definition}

\begin{remark}
It was shown that the definition of the relative sofic topological entropy does not depend on the choices of the dynamically generating continuous pseudometrics $\rho_X$ and $\rho_Y$ \cite[Lemma 8.2, Lemma 9.5]{LL15A}. One has $\hso(Y|X) \leq \min (\hso(X), \hso(Y))$. 
\end{remark}
\section{Algebraic entropy}

When we specialize the length function as $\log|\cdot|$ on $\bZ$-mdoules, the corresponding sofic mean length function in \cite[Definition 3.1]{LL15A} brings us the notion for the algebraic analogue of sofic topological entropy. For reader's convenience, we recall the complete definition.

For any (abelian) group $G$, denote by $\sF(G)$ the collection of finitely generated subgroups of $G$. Let $\cM$ be a $\ZG$-module and $\sA, \sB \in \sF(\cM)$. For  $F \in \cF(\Gamma)$ and a map $\sigma: \Gamma \to \Sym(d)$ for some $d \in \bN$, denote by $\sM(\sB, F, \sigma)$ the abelian subgroup of $\cM^d \cong \bZ^d \otimes_\bZ \cM$ generated by the elements $\delta_v\otimes b-\delta_{\sigma(s)v}\otimes sb$ for all $v \in [d], b \in \sB$, and $s \in F$, where $\delta_v$ denotes the element of $\bZ^d$ taking value 1 at the coordinate $v$ and $0$ everywhere else.  Put $\sM(\sA, \sB, F, \sigma)$ as the image of $\sA^d$ in $\cM^d/\sM(\sB, F, \sigma)$ under the quotient map $\cM^d \to \cM^d/\sM(\sB, F, \sigma)$. We remark that any $\ZG$-module $\cM$ is locally $\log|\cdot|$-finite if and only if $\cM$ is torsion as an abelian group.

\begin{definition} \label{sofic algebraic entropy}
Let $\cM_1 \subseteq \cM_2$ be $\ZG$-modules such that $\cM_1$ is torsion. For any $\sA \in \sF(\cM_1), \sB \in \sF(\cM_2)$ and $F \in \cF(\Gamma)$, set
$$\hso(\sA|\sB, F):=\lim_{i \to \omega} \frac{\log|\sM(\sA, \sB, F, \sigma_i)|}{d_i}.$$
We define the {\it algebraic entropy of $\cM_1$ relative to $\cM_2$} as
$$\hso(\cM_1|\cM_2):=\sup_{\sA \in \sF(\cM_1)} \inf_{F \in \cF(\Gamma)} \inf_{\sB \in \sF(\cM_2)}\hso(\sA|\sB, F).$$
The {\it sofic algebraic entropy} of $\cM_1$ is then defined as 
$$\hso(\cM_1):=\hso(\cM_1|\cM_1).$$
\end{definition}

Apply \cite[Theorem 1.1]{LL15A} to the length function $\log|\cdot|$ on $\bZ$-modules, we have the modified addition formula for sofic algebraic entropy.

\begin{theorem} \label{addition}
Let $\cM_1 \subseteq \cM_2$ be $\ZG$-modules such that $\cM_2$ is torsion as an abelian group. Then
$$\hso(\cM_2)=\hso(\cM_1|\cM_2)+\hso(\cM_2/\cM_1).$$
\end{theorem}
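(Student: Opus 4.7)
The plan is to deduce this as a direct specialization of \cite[Theorem 1.1]{LL15A}, which is a modified addition formula for the sofic mean length invariant associated to \emph{any} length function $\rL$ on modules over a unital ring $R$ and any countable sofic group $\Gamma$. Here I specialize to $R=\bZ$ and $\rL(\cdot)=\log|\cdot|$, so what I need to verify is that (i) $\log|\cdot|$ genuinely satisfies the axioms of a length function in the sense of \cite[Definition 2.1]{LL15A}, (ii) the local-$\rL$-finiteness hypothesis of the general addition formula is equivalent to the torsion hypothesis imposed on $\cM_2$, and (iii) the mean length function $\mL_{\Sigma,\omega}$ of \cite{LL15A} specializes in this setting to the invariant $\hso(\cdot|\cdot)$ of Definition~\ref{sofic algebraic entropy}.

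For (i), I would check the required axioms on $\log|\cdot|$: it is defined on finitely generated abelian groups, takes values in $[0,\infty]$, is additive on short exact sequences of finite abelian groups (since $|A|=|B|\cdot|C|$ whenever $0\to B\to A\to C\to 0$ is exact with all groups finite), and is upper continuous with respect to the directed system of finitely generated subgroups (since any finitely generated torsion abelian group is finite). Infinite finitely generated groups receive value $+\infty$, which is consistent with the convention. For (ii), a $\bZ$-module is locally $\log|\cdot|$-finite precisely when every finitely generated subgroup is finite, i.e. when it is torsion as an abelian group — exactly as remarked just before Definition~\ref{sofic algebraic entropy}. So the hypothesis that $\cM_2$ is torsion is exactly the hypothesis needed to invoke \cite[Theorem 1.1]{LL15A}.

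For (iii), the sofic mean length in \cite{LL15A} is built from the quantity $\rL(\sM(\sA,\sB,F,\sigma))$, which here is $\log|\sM(\sA,\sB,F,\sigma)|$, then normalized by $d_i^{-1}$, passed to the ultralimit, and finally taken through the sup-inf-inf over $\sA,F,\sB$. This is literally the definition of $\hso(\cM_1|\cM_2)$ given above, and $\hso(\cM_2)=\hso(\cM_2|\cM_2)$ matches $\mL_{\Sigma,\omega}(\cM_2)$. Hence \cite[Theorem 1.1]{LL15A}, which asserts
\[
\mL_{\Sigma,\omega}(\cM_2)=\mL_{\Sigma,\omega}(\cM_1\,|\,\cM_2)+\mL_{\Sigma,\omega}(\cM_2/\cM_1)
\]
whenever $\cM_2$ is locally $\rL$-finite, translates verbatim into the desired identity $\hso(\cM_2)=\hso(\cM_1|\cM_2)+\hso(\cM_2/\cM_1)$.

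The only genuinely delicate point I anticipate is bookkeeping of the invariant for the quotient $\cM_2/\cM_1$: one should confirm that $\hso(\cM_2/\cM_1)$ in our notation coincides with the absolute $\mL_{\Sigma,\omega}(\cM_2/\cM_1)$ rather than some relative variant, and that the short exact sequence $0\to\cM_1\to\cM_2\to\cM_2/\cM_1\to 0$ places us in the exact framework in which \cite[Theorem 1.1]{LL15A} is stated. Once this dictionary is set up carefully, no further work is needed and the formula drops out by citation.
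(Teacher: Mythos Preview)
Your proposal is correct and matches the paper's own argument exactly: the paper obtains Theorem~\ref{addition} simply by applying \cite[Theorem~1.1]{LL15A} to the length function $\log|\cdot|$ on $\bZ$-modules. Your verification of the dictionary (that $\log|\cdot|$ is a length function, that local $\log|\cdot|$-finiteness is torsion, and that $\mL_{\Sigma,\omega}$ specializes to $\hso$) is more explicit than what the paper writes, but it is precisely the content behind that one-line citation.
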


By the similar argument as in \cite[Section 3]{LL15A}, we have the following proposition, which collects basic properties of the sofic algebraic entropy. 

\begin{proposition} \label{property}
Let $\cM_1 \subseteq \cM_2$ be $\ZG$-modules such that $\cM_2$ is torsion as an abelian groups. Then the following are true.
\begin{enumerate}
\item If $\sN_1 \subseteq \sN_2$ are two torsion abelian groups, then $\hso(\ZG \otimes_\bZ \sN_1|\ZG \otimes_\bN \sN_2)=\log|\sN_1|$;
    \item If  $\{\cM_j'\}_{j \in \cJ}$ is an increasing net of $\bZ \Gamma$-submodules of $\cM_1$, then $\hso(\cM_j'|\cM_2) \nearrow \hso(\bigcup_{j \in \cJ} \cM_j'|\cM_2)$. Morerover, if $\hso(\cM_2) < \infty$, then $\hso(\cM_2/\cM_j') \searrow \hso(\cM_2/\bigcup_{j \in \cJ} \cM_j')$;
    \item If $\cM_1$ is a finitely generated $\ZG$-modules, and $\{\cM_j'\}_{j \in \bN}$ is an increasing sequence of $\ZG$-submodules of $\cM_2$ containing $\cM_1$ with union $\cM_2$, then $\hso(\cM_1|\cM_j') \searrow \hso(\cM_1|\cM_2)$.
\end{enumerate}

\end{proposition}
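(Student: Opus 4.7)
The plan is to adapt the arguments of \cite[Section 3]{LL15A} to the length function $\rL = \log|\cdot|$ on $\bZ$-modules, combined with the modified addition formula (Theorem \ref{addition}).

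For part (1), the computation is an algebraic analogue of the entropy of a Bernoulli shift. Any $\sA \in \sF(\ZG \otimes_\bZ \sN_1)$ sits inside $\bigoplus_{s \in F_0} s \otimes \sN_1'$ for some $F_0 \in \cF(\Gamma)$ and a finite subgroup $\sN_1' \subseteq \sN_1$ (finite because $\sN_2$ is torsion and $\sA$ is finitely generated). Choosing $\sB \supseteq \{1\} \otimes \sN_1'$ and $F \supseteq F_0$, the defining relations of $\sM(\sB, F, \sigma)$ identify $\delta_v \otimes (s \otimes n)$ with $\delta_{\sigma(s)v} \otimes (1 \otimes n)$, forcing $|\sM(\sA, \sB, F, \sigma)| \leq |\sN_1'|^d$ and yielding the upper bound $\log|\sN_1|$. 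For the lower bound I take $\sA = \{1\} \otimes \sN_1'$; the sofic approximation axioms yield subsets of $[d_i]$ of density $1-o(1)$ on which these relations do not collapse $\sA^{d_i}$, producing at least $|\sN_1'|^{d_i(1-o(1))}$ inequivalent classes, and letting $\sN_1'$ exhaust $\sN_1$ gives the matching lower bound $\log|\sN_1|$.

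For part (2), the first assertion relies on the identity $\sF(\bigcup_j \cM_j') = \bigcup_j \sF(\cM_j')$, valid because any finitely generated abelian subgroup embeds into a single term of a directed net. Since only the outer $\sup_\sA$ is affected while $\inf_F \inf_{\sB \in \sF(\cM_2)}$ is unchanged, the monotone increase follows at once. For the ``moreover'' assertion, applying Theorem \ref{addition} to both $\cM_j' \subseteq \cM_2$ and $\bigcup_j \cM_j' \subseteq \cM_2$ gives $\hso(\cM_2/\cM_j') = \hso(\cM_2) - \hso(\cM_j'|\cM_2)$ and $\hso(\cM_2/\bigcup_j \cM_j') = \hso(\cM_2) - \hso(\bigcup_j \cM_j' \mid \cM_2)$, where the subtraction is legitimate because $\hso(\cM_2) < \infty$; the first assertion then delivers the decreasing convergence.

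For part (3), the monotone decrease is immediate from $\sF(\cM_j') \subseteq \sF(\cM_{j+1}') \subseteq \sF(\cM_2)$, which shrinks the inner infimum over $\sB$. For each fixed $(\sA, F)$ the pointwise convergence $\inf_{\sB \in \sF(\cM_j')} \hso(\sA|\sB, F) \searrow \inf_{\sB \in \sF(\cM_2)} \hso(\sA|\sB, F)$ is automatic, since every $\sB \in \sF(\cM_2)$ lies in $\sF(\cM_j')$ for $j$ sufficiently large. The crux is to interchange the outer $\sup_\sA$ with $\lim_j$: here the finite generation of $\cM_1$ as a $\ZG$-module supplies a cofinal sequence $\sA_n = (\bZ F_n)\sA_0$ in $\sF(\cM_1)$, where $\sA_0$ contains a fixed set of $\ZG$-generators of $\cM_1$ and $F_n \nearrow \Gamma$. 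Combining this with Theorem \ref{addition} applied to $\cM_1 \subseteq \cM_j'$ and $\cM_1 \subseteq \cM_2$ together with part (2) applied to $\cM_j'/\cM_1 \nearrow \cM_2/\cM_1$ reduces the claim to a telescoping of monotone sequences, whence a diagonal argument concludes. I expect this $\sup$-$\lim$ interchange to be the main obstacle, since it fails in general without the finite generation hypothesis.
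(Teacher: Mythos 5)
The paper itself gives no proof of this proposition; it defers to the arguments of \cite[Section 3]{LL15A} specialized to $\rL=\log|\cdot|$, so your reconstruction stands on its own. Parts (1) and (2) are essentially right: the identification $\sF(\bigcup_j\cM_j')=\bigcup_j\sF(\cM_j')$, together with the fact that only the outer supremum depends on the first argument, gives the first half of (2), and the ``moreover'' follows from Theorem \ref{addition} exactly as you say. In (1) the upper bound is fine; the lower bound is only sketched --- the real content of the argument in \cite{LL15A} is the construction, for \emph{arbitrary} $\sB$ and $F$, of a homomorphism out of $\cM^d$ that kills $\sM(\sB,F,\sigma)$ while separating $\sA^d$ on a $(1-o(1))$-density set of coordinates --- but your sketch points at the right mechanism.

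Part (3) has a genuine gap. You correctly identify the $\sup$--$\lim$ interchange as the crux, but the proposed resolution via Theorem \ref{addition} and part (2) cannot work. The addition formula applied to $\cM_1\subseteq\cM_j'$ expresses the \emph{absolute} entropy $\hso(\cM_j')$ as $\hso(\cM_1|\cM_j')+\hso(\cM_j'/\cM_1)$, whereas part (2) controls the limits of the \emph{relative} quantities $\hso(\cM_j'|\cM_2)$ and $\hso(\cM_j'/\cM_1\mid\cM_2/\cM_1)$; these do not coincide with $\hso(\cM_j')$ and $\hso(\cM_j'/\cM_1)$ in general --- indeed the Corollary closing Section 3 states that such equalities for all pairs characterize amenability of $\Gamma$. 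Moreover $\hso(\cM_j')$ and $\hso(\cM_j'/\cM_1)$ may be infinite (no finiteness is assumed in (3)), so the telescoping subtractions are not legitimate. The correct route is the one you mention in passing but never actually use: prove the analogue of the finite-generation lemma of \cite{LL15A}, namely that for $\cM_1=\ZG\sA_0$ with $\sA_0\in\sF(\cM_1)$ a generating subgroup, one has $\hso(\cM_1|\cN)=\inf_F\inf_{\sB\in\sF(\cN)}\hso(\sA_0|\sB,F)$ for any ambient module $\cN\supseteq\cM_1$. (The point is that if $\sA\subseteq\sum_{s\in F_1}s\sA_0$, then modulo $\sM(\sB,F,\sigma)$ with $\sA_0\subseteq\sB$ and $F\supseteq F_1$ the image of $\sA^d$ is controlled by that of $\sA_0^d$ up to an $o(d)$ correction coming from the non-multiplicative part of $\sigma$, so enlarging $\sA$ beyond $\sA_0$ does not increase the iterated $\sup\inf\inf$.) Once the supremum over $\sA$ is frozen at the single $\sA_0$, the interchange disappears: $\inf_{\sB\in\sF(\cM_j')}\hso(\sA_0|\sB,F)\searrow\inf_{\sB\in\sF(\cM_2)}\hso(\sA_0|\sB,F)$ because $\sF(\cM_2)=\bigcup_j\sF(\cM_j')$, and (3) follows.
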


Using the similar argument in the proof of \cite[Proposition 3.5 (2)]{LL15A}, analogues to \cite[Lemma 4.1]{CT15}, we have
\begin{lemma} \label{strict positive}
Let $R$ be a finite abelian group. Then for any $\ZG$-module $\cM \subseteq (R\Gamma)^k$ for some $k \in \bN$, we have $\cM=0$ if and only if $\hso(\cM|(R\Gamma)^k)=0$. In fact, for any $f \in (\ZG)^k$,
$$\hso(R\Gamma f|(R\Gamma)^k) \geq \frac{\log |\{rf: r \in R\}|}{(2|{\rm supp} (f)|+1)^2}.$$

\end{lemma}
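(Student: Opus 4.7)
The plan is to reduce the claim to a uniform lower bound on the relative entropies appearing in the defining infimum of $\hso(R\Gamma f|(R\Gamma)^k)$. I would take $\sA := \{rf : r \in R\}$, a finite abelian subgroup of $R\Gamma f$, so that
\[
\hso(R\Gamma f|(R\Gamma)^k) \;\geq\; \inf_{F \in \cF(\Gamma)} \inf_{\sB \in \sF((R\Gamma)^k)} \hso(\sA|\sB, F).
\]
Writing $n := |{\rm supp}(f)|$, it suffices to prove $\hso(\sA|\sB, F) \geq \log|\sA|/(2n+1)^{2}$ uniformly in $F$ and $\sB$. Once this inequality is available, the first assertion follows: if $\cM \neq 0$, take any nonzero $f \in \cM$ and note $|\{rf : r \in R\}| \geq 2$; the inequality combines with monotonicity of $\hso(\,\cdot\,|(R\Gamma)^k)$ in its first argument under submodule inclusion to give $\hso(\cM|(R\Gamma)^k) > 0$. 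The converse is immediate, since $\cM = 0$ forces $\sM(\sA, \sB, F, \sigma_i) = 0$ for every $\sA, \sB, F$, and $i$.

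Fix $F$ and $\sB$, and set $E := {\rm supp}(f) \cup {\rm supp}(f)^{-1} \cup \{e\}$, so $|E| \leq 2n+1$ and $|E^{-1} E| \leq (2n+1)^{2}$. For each $i$ I construct $V_i \subseteq [d_i]$ in two steps. First, applying the sofic axioms on a finite subset of $\Gamma$ containing $E \cdot E^{-1}$ together with the group elements needed to write out the relations indexed by $F$ and ${\rm supp}(\sB)$, I extract $V_i^{(0)} \subseteq [d_i]$ of density tending to $1$ on which $\sigma_i$ restricts to a bona fide partial action. Second, I perform a greedy selection inside $V_i^{(0)}$ to produce $V_i$ such that the neighborhoods $\sigma_i(E) v$ for $v \in V_i$ are pairwise disjoint; each new selection excludes at most $|E^{-1} E| \leq (2n+1)^{2}$ candidates, so $|V_i|/d_i \to 1/(2n+1)^{2}$ as $i \to \omega$.

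The core step is injectivity of
\[
\iota_i \colon \sA^{V_i} \longrightarrow \sM(\sA, \sB, F, \sigma_i), \qquad (r_v)_{v} \longmapsto \Bigl[\sum_{v \in V_i} \delta_v \otimes r_v f\Bigr].
\]
Given $(s_v) \in \sA^{V_i}$ with $\iota_i((s_v)) = 0$, one writes the corresponding element of $\sM(\sB, F, \sigma_i)$ as $\sum_j c_j(\delta_{u_j} \otimes b_j - \delta_{\sigma_i(t_j) u_j} \otimes t_j b_j)$ and, for each $w \in V_i$, extracts the identity at coordinate $w$ on positions in $B := {\rm supp}(f)$. The disjointness built into $V_i$ forces the contributing indices $j$ to have $u_j$ or $\sigma_i(t_j) u_j$ in $\sigma_i(E) w$, localizing the analysis to a single neighborhood. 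A flow-balance argument on this local system, in the spirit of \cite[Proposition~3.5(2)]{LL15A} and \cite[Lemma~4.1]{CT15}, then forces $s_w f = 0$ for every $w \in V_i$. Injectivity yields $|\sM(\sA, \sB, F, \sigma_i)| \geq |\sA|^{|V_i|}$, and taking $\log$, dividing by $d_i$, and passing to the $\omega$-limit gives $\hso(\sA|\sB, F) \geq \log|\sA|/(2n+1)^{2}$ as required.

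I expect the technical hurdle to be the localization-and-cancellation argument in this last step. The generators $\delta_{u_j} \otimes b_j - \delta_{\sigma_i(t_j) u_j} \otimes t_j b_j$ can place $B$-supported contributions at coordinate $w$ via either $b_j$ or its shift $t_j b_j$, and when $\sB$ is large these contributions are unavoidable. The key point is that the packing set $E$ depends only on ${\rm supp}(f)$, not on $F$ or $\sB$, so the density bound $1/(2n+1)^{2}$ is intrinsic; the chains of shifts "$b_j \to t_j b_j$" close up locally because $\sigma_i$ is a genuine partial action on $V_i^{(0)}$, and the resulting flow cancellations recover exactly the $\sA$-valued mass placed at each $v \in V_i$.
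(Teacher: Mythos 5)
Your scaffolding---packing a set $V_i$ of density roughly $1/(2|{\rm supp}(f)|+1)^2$ whose $E$-neighbourhoods are pairwise disjoint, and then embedding $\sA^{V_i}$ into $\sM(\sA,\sB,F,\sigma_i)$---has the right shape and reproduces the constant in the statement; the paper itself gives no proof, deferring to \cite[Proposition 3.5(2)]{LL15A} and \cite[Lemma 4.1]{CT15}, whose argument has exactly this structure. But the injectivity of $\iota_i$, which you correctly flag as the crux, is not established: the ``flow-balance argument on this local system'' is named rather than carried out, and the route you sketch for it is genuinely problematic. An element of $\sM(\sB,F,\sigma_i)$ is an arbitrary integer combination of generators $\delta_u\otimes b-\delta_{\sigma_i(t)u}\otimes tb$ with $u$ ranging over \emph{all} of $[d_i]$, including points where $\sigma_i$ is far from a partial action; such a generator can deposit ${\rm supp}(f)$-shaped mass inside $\sigma_i(E)w$ for $w\in V_i$ while being anchored at a bad point, the number of generators and the sizes of their coefficients are unbounded, and nothing in your description localizes or terminates the cancellation. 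Without an explicit conserved quantity there is no reason the ``chains of shifts'' close up.

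The missing idea---and the engine of the cited proofs---is a genuine group homomorphism that annihilates every generator of $\sM(\sB,F,\sigma_i)$ and separates the packed copies of $\sA$. Concretely, define $\Theta_i\colon ((R\Gamma)^k)^{d_i}\to (R^k)^{[d_i]}$ by $\Theta_i(\delta_v\otimes g)(w)=\sum_{s\colon \sigma_i(s)w=v}g_s$, and let $W_i\subseteq[d_i]$ be the set of $w$ satisfying $\sigma_i(ts)w=\sigma_i(t)\sigma_i(s)w$ and $\sigma_i(s)w\neq\sigma_i(s')w$ for the finitely many relevant $t\in F$ and distinct $s,s'$ coming from the supports of $\sB$ and $f$. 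Then $P_{W_i}\circ\Theta_i$ kills $\delta_u\otimes b-\delta_{\sigma_i(t)u}\otimes tb$ for \emph{every} $u\in[d_i]$ (the goodness condition sits at the target coordinate $w$, not at the anchor $u$), hence descends to $\cM^{d_i}/\sM(\sB,F,\sigma_i)$; and on $\bigoplus_{v\in V_i}\delta_v\otimes\sA$ it is injective because the blocks $\{\sigma_i(s)^{-1}v\colon s\in{\rm supp}(f)\}$, $v\in V_i$, lie in $W_i$, are pairwise disjoint, and each reads off $rf$ coefficient by coefficient. With this substitution your counting, the uniformity in $F$ and $\sB$, and the deduction of the first assertion all go through---except that for the first assertion you should take $\sA=\bZ f$ for a nonzero $f\in\cM$ rather than $\{rf\colon r\in R\}$, since $R$ is only an abelian group and elements of $\cM\subseteq(R\Gamma)^k$ need not admit multiplication by $R$.
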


\begin{proposition}
Let $R$ be a finite abelian group. Assume for any nonzero $f$ of $R \Gamma$,  we have $h_{\Sigma}(R \Gamma/R \Gamma f)=0$. Then $R \Gamma$ contains no nontrivial zero divisors.
\end{proposition}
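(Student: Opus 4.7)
The plan is to argue by contradiction. Suppose there are nonzero $f, g \in R\Gamma$ with $fg = 0$; I will use two applications of the addition formula (Theorem \ref{addition}) together with Lemma \ref{strict positive} to reach a contradiction. Since $R$ is finite, $R\Gamma$ is torsion as an abelian group, so Theorem \ref{addition} applies.

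First I would compute $\hso(R\Gamma) = \log|R|$ by Proposition \ref{property}(1), using the identification $R\Gamma \cong \ZG \otimes_\bZ R$ as a $\ZG$-module with $\sN_1 = \sN_2 = R$. Applying Theorem \ref{addition} to the short exact sequence
$$0 \to R\Gamma g \to R\Gamma \to R\Gamma / R\Gamma g \to 0$$
and using the hypothesis $\hso(R\Gamma / R\Gamma g) = 0$ (valid since $g \neq 0$), I obtain
$$\hso(R\Gamma g \mid R\Gamma) = \log|R|.$$

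Next, right multiplication by $g$ gives an $R\Gamma$-linear surjection $R\Gamma \twoheadrightarrow R\Gamma g$ whose kernel is the left ideal $\mathrm{rAnn}(g) := \{x \in R\Gamma : xg = 0\}$. A second application of Theorem \ref{addition} yields
$$\log|R| \;=\; \hso(\mathrm{rAnn}(g) \mid R\Gamma) + \hso(R\Gamma g).$$
Because $R\Gamma g \subseteq R\Gamma$, one has $\hso(R\Gamma g) \geq \hso(R\Gamma g \mid R\Gamma) = \log|R|$ (the infimum defining the relative entropy ranges over a larger collection of $\sB$). Plugging in forces $\hso(\mathrm{rAnn}(g) \mid R\Gamma) = 0$.

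Finally, from $fg=0$ with $f \neq 0$ I get the nonzero submodule $R\Gamma f \subseteq \mathrm{rAnn}(g)$. By the monotonicity of relative entropy in its first argument (Proposition \ref{property}(2)),
$$\hso(R\Gamma f \mid R\Gamma) \;\leq\; \hso(\mathrm{rAnn}(g) \mid R\Gamma) \;=\; 0,$$
contradicting the strict positivity $\hso(R\Gamma f \mid R\Gamma) > 0$ granted by Lemma \ref{strict positive}. The main point to verify carefully is the inequality $\hso(\cM) \geq \hso(\cM \mid \cN)$ for $\cM \subseteq \cN$, which I expect to be immediate from the definition (the infimum over $\sF(\cN)$ is at most the infimum over the smaller family $\sF(\cM)$); once granted, the rest is bookkeeping with the addition formula.
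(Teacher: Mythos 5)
Your argument is correct and is essentially the paper's own proof: both use the addition formula twice (once for $R\Gamma/R\Gamma g$ via the hypothesis, once for $R\Gamma g\cong R\Gamma/\Ker R(g)$), the inequality $\hso(\cM_1|\cM_2)\le\hso(\cM_1)$, and Lemma \ref{strict positive} to force the annihilator to vanish. The only cosmetic difference is that the paper applies the lemma directly to the submodule $\Ker R(f)$ rather than passing to $R\Gamma f\subseteq\Ker R(g)$ by monotonicity.
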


\begin{proof}
  By Theorem \ref{addition} we have $0=\hso(R\Gamma/R \Gamma f)=\hso(R \Gamma)-\hso(R\Gamma f|R \Gamma)$. Since $\hso(R\Gamma f|R\Gamma)\leq \hso(R\Gamma f) \leq \hso(R\Gamma)$, we have $\hso(R\Gamma f)=\hso(R\Gamma)$.
  
  On the other hand, let $R(f): R\Gamma \to R\Gamma$ be the $R\Gamma$-module homomorphism sending $x$ to $xf$. Then we have $\hso(R \Gamma f)=\hso(R \Gamma)-\hso(\Ker R(f)|R \Gamma)$. It forces that $\hso(\Ker R(f)|R \Gamma)=0$. By Lemma \ref{strict positive}, $\Ker R(f)=0$. That means $R \Gamma$ has no nontrivial zero divisor.
\end{proof}

We remark the similar approaches also appear in \cite[Lemma 4.4]{CT15}, \cite[Theorem 7.14]{Virili14A}, and \cite[Theorem 4.5]{LL15A}.
\begin{definition}
When $\Gamma$ is amenable,  for any $\ZG$-module $\cM$ such that $\cM$ is torsion as an abelian group, the {\it algebraic entropy} of $\cM$ is defined as
$$h(\cM)=\sup_{\sA \in \sF(\cM)} \inf_{F \in \cF(\Gamma)} \frac{\log|\sum_{s \in F} s^{-1}\sA|}{|F|}.$$

\end{definition}

From \cite[Theorem 5.1]{LL15A}, we know the sofic  algebraic entropy is a generalization of algebraic entropy.

\begin{theorem} \label{generalization}
Let $\Gamma$ be an amenable group. For any $\ZG$-modules $\cM_1 \subseteq \cM_2$ such that $\cM_2$ is torsion as abelian group, then $\hso(\cM_1|\cM_2)=h(\cM_1)$.
\end{theorem}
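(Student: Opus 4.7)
The plan is to reduce to the general statement \cite[Theorem 5.1]{LL15A}, which is formulated for arbitrary length functions $\rL$ on modules over a unital ring $R$. I would specialize $R=\bZ$ and $\rL(\cdot)=\log|\cdot|$ and then verify three matches: the locally $\rL$-finite condition specializes to ``torsion as an abelian group'' (so both $\cM_1$ and $\cM_2$ are legitimate inputs), the sofic mean length of $\cM_1$ relative to $\cM_2$ specializes to $\hso(\cM_1|\cM_2)$ from Definition \ref{sofic algebraic entropy}, and the amenable mean length specializes to $h(\cM_1)$ because
$$\rL\bigg(\sum_{s\in F} s^{-1}\sA\bigg) = \log\bigg|\sum_{s\in F} s^{-1}\sA\bigg|.$$
Feeding these identifications into the general theorem gives the equality. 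As a consistency check, the right-hand side is independent of $\cM_2$, which is a special feature of the amenable case.

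For a self-contained argument, I would fix a F{\o}lner sequence $(F_n)_{n\in\bN}$ for $\Gamma$ and the associated sofic approximation $\sigma_n\colon\Gamma\to\Sym(F_n)$ defined by $\sigma_n(s)(v)=sv$ when $sv\in F_n$ and completed arbitrarily otherwise. Since $\hso$ is independent of the choice of sofic approximation, I may take $\Sigma=(\sigma_n)$. For fixed $\sA\in\sF(\cM_1)$ and $F\in\cF(\Gamma)$ containing $e$, I would choose $\sB\in\sF(\cM_2)$ large enough to contain $\sA\cup F\sA\cup F\sB$, which is possible because $\cM_2$ is torsion. On the $F$-interior $F_n^\circ=\{v\in F_n:Fv\subseteq F_n\}$, the defining relations $\delta_v\otimes b\equiv\delta_{sv}\otimes sb$ hold exactly in $\cM_2^{F_n}/\sM(\sB,F,\sigma_n)$, and I would use them to propagate or collapse coordinates.

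For the upper bound $\hso(\cM_1|\cM_2)\leq h(\cM_1)$, transporting values in $\sA$ using these relations bounds the cardinality of the image of $\sA^{F_n}$ by a quantity of the form $|\sum_{s\in F}s^{-1}\sA|^{|F_n|/|F|}$ times a boundary factor $|\sA|^{|F_n\setminus F_n^\circ|}$. The F{\o}lner property annihilates the boundary after taking $\log$, dividing by $|F_n|$, and passing to the ultralimit, and infimizing over $F$ recovers $h(\sA)$. For the lower bound $\hso(\cM_1|\cM_2)\geq h(\cM_1)$, I would invoke the Ornstein--Weiss quasi-tiling theorem: given $\delta>0$ and a sufficiently invariant $E\in\cF(\Gamma)$, all but a $\delta$-fraction of $F_n$ is covered by essentially disjoint left translates $u_jE$, and each tile contributes an independent copy of $\sum_{s\in E}s^{-1}\sA$ into $\sM(\sA,\sB,F,\sigma_n)$, yielding $|\sM(\sA,\sB,F,\sigma_n)|\geq|\sum_{s\in E}s^{-1}\sA|^{(1-\delta)|F_n|/|E|}$. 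Letting $\delta\to 0$ and supremizing over $E$ reproduces the F{\o}lner-mean formula for $h(\sA)$, and the supremum over $\sA$ gives $h(\cM_1)$.

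The main obstacle is the lower bound, and specifically arranging that distinct quasi-tiles yield independent contributions inside $\cM_2^{F_n}/\sM(\sB,F,\sigma_n)$. This requires ensuring that the $F$-thickenings of the tiles stay pairwise disjoint in $F_n$, so that no relation of the form $\delta_v\otimes b-\delta_{\sigma_n(s)v}\otimes sb$ connects different tiles; this is achieved by choosing $E$ sufficiently left-invariant and $\delta$ small in the quasi-tiling, with a companion book-keeping argument to absorb the uncovered $\delta$-fraction into the boundary error.
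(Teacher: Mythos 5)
Your first paragraph is exactly the paper's proof: the paper deduces the statement by citing \cite[Theorem 5.1]{LL15A} and specializing the length function to $\log|\cdot|$ on $\bZ$-modules, with precisely the identifications you list. The self-contained F{\o}lner/quasi-tiling sketch that follows is not attempted in the paper and is not needed for the reduction, so the proposal matches the paper's approach.
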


\begin{remark}
Let $R$ be a finite abelian group and $f \in R H \subseteq R \Gamma$ be nonzero for some amenable subgroup $H$ of $\Gamma$. If $R H$ has no nontrivial zero divisor, we have $R H$ is isomorphic to $R H f$ as $R H$-modules. By \cite[Corollary 6.2]{LL15A} and Theorem \ref{generalization}, we have
$$\hso(R \Gamma /R \Gamma f)=h_{\Sigma|_H, \omega}(R H/R Hf)=h(R H/R Hf)=h(R H)-h(R Hf)=0.$$
\end{remark}

By Bartholdi's characterization of nonamenability \cite[Theorem 1.1]{BK16}, for any field $\bF$ there exists an injective $\bF \Gamma$-module homomorphism $\varphi: (\bF \Gamma)^n \to (\bF \Gamma)^{n-1}$ for some  $n \in \bN$. In particular, when $\bF$ is finite, one has $\hso(\Ima \varphi|(\bF \Gamma)^{n-1})< \hso(\Ima \varphi)$. Combining Theorem \ref{generalization}, we have

\begin{corollary}
$\Gamma$ is amenable if and only if  $\hso(\cM_1|\cM_2)=\hso(\cM_1)$ holds for any $\ZG$-modules $\cM_1 \subseteq \cM_2$ such that $\cM_2$ is torsion as an abelian group.
\end{corollary}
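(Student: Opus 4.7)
The plan is to prove the two implications separately. The forward direction will follow immediately from Theorem \ref{generalization}, while the converse (in contrapositive form: nonamenability forces a strict inequality somewhere) is essentially the content of the remark paragraph preceding the corollary and hinges on Bartholdi's theorem.

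For the forward direction, suppose $\Gamma$ is amenable. Since $\cM_1 \subseteq \cM_2$ and $\cM_2$ is torsion as an abelian group, so is $\cM_1$. Applying Theorem \ref{generalization} to the inclusions $\cM_1 \subseteq \cM_2$ and $\cM_1 \subseteq \cM_1$ respectively, I would obtain $\hso(\cM_1|\cM_2) = h(\cM_1) = \hso(\cM_1|\cM_1) = \hso(\cM_1)$, which is the asserted equality.

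For the converse, I would argue the contrapositive by exhibiting a concrete witness. Fix a finite field $\bF$ (for concreteness $\bF = \bF_2$). By Bartholdi's characterization of nonamenability, there exist $n \in \bN$ and an injective $\bF\Gamma$-module homomorphism $\varphi : (\bF\Gamma)^n \to (\bF\Gamma)^{n-1}$. Set $\cM_2 := (\bF\Gamma)^{n-1}$, which is torsion as an abelian group since $\bF$ is finite, and $\cM_1 := \Ima \varphi \subseteq \cM_2$. Injectivity of $\varphi$ gives $\cM_1 \cong (\bF\Gamma)^n \cong \ZG \otimes_\bZ \bF^n$ as $\ZG$-modules, so Proposition \ref{property}(1) yields $\hso(\cM_1) = \log|\bF^n| = n\log|\bF|$. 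On the other hand, Theorem \ref{addition} applied to $\cM_1 \subseteq \cM_2$, together with the fact that sofic algebraic entropy is nonnegative (take $\sA=0$ in Definition \ref{sofic algebraic entropy}), gives $\hso(\cM_1|\cM_2) \leq \hso(\cM_2) = (n-1)\log|\bF|$. Hence $\hso(\cM_1|\cM_2) < \hso(\cM_1)$, contradicting the hypothesized equality.

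The only substantive input is Bartholdi's theorem; beyond that, the argument is a direct application of the addition formula (Theorem \ref{addition}) and Proposition \ref{property}(1) already established in the paper, so I do not expect any real obstacle once those tools are in hand.
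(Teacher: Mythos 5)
Your proof is correct and follows essentially the same route as the paper: Theorem \ref{generalization} (applied to both $\cM_1\subseteq\cM_2$ and $\cM_1\subseteq\cM_1$) for the amenable direction, and Bartholdi's injection $(\bF\Gamma)^n \to (\bF\Gamma)^{n-1}$ for the converse. The paper merely asserts the strict inequality $\hso(\Ima \varphi|(\bF \Gamma)^{n-1})< \hso(\Ima \varphi)$ without justification; your derivation of it from Proposition \ref{property}(1) and Theorem \ref{addition} supplies exactly the intended details.
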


\section{Peters' formula}

In this section, we prove Theorem \ref{main theorem}, which follows from Proposition \ref{finitely generated case} and Proposition \ref{general case}. 

Let $\bF$ be a finite field. For a finitely generated free $\bF \Gamma$-module $(\bF \Gamma)^{1\times n}$, if we identify $\widehat{(\bF \Gamma)^{1\times n}}$ with $(\bF^\Gamma)^{1\times n}=(\bF^{1\times n})^\Gamma$ naturally via the pairing
$$(\bF \Gamma)^{1\times n} \times (\bF^\Gamma)^{1\times n} \to \bF, \ (u, x) \mapsto (x u^*)_{e_\Gamma},$$
the induced natural action of $\Gamma$ on $\widehat{(\bF \Gamma)^{1\times n}}$ is then given by left shift. Here $u^*=\sum_{s \in \Gamma} u_s s^{-1}$. For a finitely presented $\bF \Gamma$-module $\cM$, write it as $\cM=(\bF \Gamma)^n/(\bF \Gamma)^mf$ for some $m, n \in \bN$ and $f \in M_{m,n}(\bF \Gamma)$, we can further identify $\hcM$ as $\Ker R(f^\ast)$ for the linear map $R(f^\ast):  (\bF^\Gamma)^{1\times n} \to (\bF^\Gamma)^{1\times m}$ sending $x$ to $xf^\ast$.

\begin{proposition} \label{finitely generated case}
Let $\bF$ be a finite field. Then for any finitely generated $\bF \Gamma$-module $\cM$, we have $\hso(\hcM)=\hso(\cM)$.
\end{proposition}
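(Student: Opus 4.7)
The plan is to first establish the statement in the finitely presented case by a direct matching of microstate counts via Pontryagin duality of finite-dimensional $\bF$-vector spaces, and then to reduce the general finitely generated case to the finitely presented one through an increasing-union argument on the relations.

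For the reduction, I would write $\cM = (\bF\Gamma)^{1\times n}/\cN$ and express $\cN = \bigcup_{j \in \bN} \cN_j$ as an increasing union of finitely generated $\bF\Gamma$-submodules, so that each $\cM_j := (\bF\Gamma)^{1\times n}/\cN_j$ is finitely presented and surjects onto $\cM$; dually $\hcM_j \supseteq \hcM_{j+1}$ with $\bigcap_j \hcM_j = \hcM$ inside $(\bF^\Gamma)^{1\times n}$. On the algebraic side, the addition formula (Theorem \ref{addition}) combined with Proposition \ref{property}(2) gives $\hso(\cM_j) = \hso((\bF\Gamma)^{1\times n}) - \hso(\cN_j | (\bF\Gamma)^{1\times n}) \searrow \hso((\bF\Gamma)^{1\times n}) - \hso(\cN | (\bF\Gamma)^{1\times n}) = \hso(\cM)$. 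On the topological side, monotonicity gives $\hso(\hcM) \leq \inf_j \hso(\hcM_j)$; the reverse follows because \cite[Lemma 4.8]{GS15A} describes $\hso(\hcM_j)$ in terms of microstates satisfying only the $\cN_j$-relations approximately, so imposing further relations as $j \to \infty$ squeezes the counts down to those for $\hcM$.

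For the finitely presented core, I would write $\cM = (\bF\Gamma)^{1\times n}/(\bF\Gamma)^{1\times m} f$ so that, by the pairing recalled just before the proposition, $\hcM = \Ker R(f^*) \subseteq (\bF^\Gamma)^{1\times n}$ with the shift action. Equip $\bF^n$ with the discrete metric and take $\rho$ on $(\bF^\Gamma)^{1\times n}$ to be the induced pseudometric from evaluation at $e_\Gamma$; for $\varepsilon < 1$, $\rho_{X,\infty}$-separation of approximately equivariant $\varphi: [d_i] \to (\bF^\Gamma)^{1\times n}$ reduces to disagreement at some $v \in [d_i]$. Using \cite[Lemma 4.8]{GS15A}, $\hso(\hcM)$ equals the asymptotic log-count (divided by $d_i$) of approximately equivariant $\varphi$ satisfying $\varphi \cdot f^* \approx 0$, read off on a finite window $E$. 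On the algebraic side, take $\sA$ to be the $\bF$-span of the images of the standard generators in $\cM$ and $\sB \supseteq \sA$ large enough to absorb $F\cdot \sA$ and the coefficients of $f$; unpacking $\sM(\sA, \sB, F, \sigma_i)$ and using that $\cM$ is an $\bF$-vector space, $|\sM(\sA, \sB, F, \sigma_i)|$ becomes $|\Coker \tilde f_i|$, where $\tilde f_i \in M_{md_i, nd_i}(\bF)$ is the $\sigma_i$-twisted matrix obtained by substituting the permutation matrix $\sigma_i(s)$ for each $s \in \Gamma$ appearing in $f$. The two quantities are matched via the Pontryagin duality identity $|\Coker \tilde f_i| = |\Ker \tilde f_i^*|$ for finite $\bF$-vector spaces, which is precisely how the pairing in the preamble to the proposition is set up; dividing by $d_i$, taking logarithms, and passing to $i \to \omega$ then yields $\hso(\hcM) = \hso(\cM)$.

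The main obstacle I anticipate is the microstate matching in the finitely presented case. One has to show that the topological count of approximately equivariant $\varphi$ with $\varphi \cdot f^* \approx 0$ agrees with the algebraic count $|\sM(\sA, \sB, F, \sigma_i)|$ not on the nose for the exact $\sigma_i$-twisted matrix, but uniformly in the approximation parameters, so that both sides pass to the same ultrafilter limit as $F \nearrow \Gamma$ and $\delta \searrow 0$. The approximation formula of \cite[Lemma 4.8]{GS15A} is the bridge between ``microstates landing exactly in $\hcM$'' and ``microstates approximately solving $\varphi \cdot f^* = 0$''; verifying that its hypotheses apply here and carefully tracking the error terms, so that they vanish compatibly on both the topological and the algebraic side, is where the bulk of the technical work will lie.
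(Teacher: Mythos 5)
Your proposal follows essentially the same route as the paper: reduce to the finitely presented case by writing the relation module as an increasing union (using \cite[Corollary 3.6]{GS15A} on the dual side and Proposition \ref{property} on the algebraic side), then in the finitely presented case compute $\hso(\cM)$ as $\varlimsup_i \log|\Ker\sigma_i(f)|/d_i$ via the addition formula and finite-dimensional duality, and match it with $\varlimsup_i \log|\Ker\bar\sigma_i(f^\ast)|/d_i$ coming from \cite[Lemma 4.8]{GS15A}. The ``main obstacle'' you flag --- that the twisted matrix for $f$ and the adjoint of the twisted matrix for $f^\ast$ only agree approximately because $\sigma_i(s^{-1})\neq\sigma_i(s)^{-1}$ in general --- is exactly the point the paper resolves, by projecting onto the set $\cW$ of coordinates where these agree and bounding the discrepancy by $n|\cW^c|\log|\bF|=o(d)$.
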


\begin{proof}
We first show the case that $\cM$ is finitely presented. Write $\cM$ as $\cM=(\bF \Gamma)^n/(\bF \Gamma)^mf$ for some $m, n \in \bN$ and $f \in M_{m,n}(\bF \Gamma)$. Given a map $\sigma: \Gamma \to \Sym(d)$ for some $d \in \bN$, $f$ induces linear maps $\sigma(f): (\bF^d)^{n\times 1} \to (\bF^d)^{m\times 1}$ and $\bar{\sigma}(f): (\bF^d)^{m\times 1} \to (\bF^d)^{n\times 1}$ which are uniquely determined by the linear maps $\sigma_s: \bF^d \to \bF^d$ and $\bar{\sigma}_s: \bF^d \to \bF^d$ respectively for every $s \in \Gamma$.  Here $(\sigma_s(x))(v)=x((\sigma_s)^{-1}v) $ and $(\bar{\sigma}_s(x))(v)=x(\sigma_s(v))$ (see \cite[Section 6]{LL15A} for more details). Apply \cite[Theorem 1.1]{LL15A} and \cite[Proposition 6.1]{LL15A} to the length function $\log|\cdot|$, we have
\begin{align*}
    \hso(\cM)&=\hso((\bF \Gamma)^n)-\hso((\bF \Gamma)^m f|(\bF \Gamma)^n)\\
    &=\hso((\bF \Gamma)^n)-\varlimsup_{i \to \infty} \frac{\log |\Ima \bar{\sigma}_i(f)|}{d_i}.
\end{align*}

Following the similar argument as in the proof of \cite[Lemma 7.4]{LL15A}, we know $\Ker \sigma(f)$ is isomorphic to the dual vector space of $(\bF^d)^{n\times 1}/\bar{\sigma}(f)((\bF^d)^{m\times 1})$. Hence,
$$ \hso(\cM)=\varlimsup_{i \to \infty} \frac{\log |\Ker \sigma_i(f)|}{d_i}.$$

From \cite[Lemma 4.8]{GS15A}, we have
$$h_\Sigma(\hcM)=h_\Sigma(\Ker R(f^\ast))=\varlimsup_{i \to \infty} \frac{\log |\Ker \bar{\sigma}_i(f^\ast)|}{d_i}.$$
For a given map $\sigma: \Gamma \to \Sym(d)$ for some $d \in \bN$, put $\cW=\{v \in [d]: \sigma(s^{-1})(v)=\sigma(s)^{-1}(v), {\rm for \ all \ } s \in {\rm supp}(f)\}$. Then it is easy to verify that 
$$\Ker \sigma(f) \subseteq \Ker (P_{\cW} \circ \sigma(f))=\Ker (P_{\cW} \circ \bar{\sigma}(f^\ast)) \supseteq \Ker \bar{\sigma}(f^\ast),$$
where $P_{\cW}$ is the map projecting onto the $\cW$ coordinates.
It follows that 
$$\log|\Ker \sigma(f)| \leq \log|\Ker (P_{\cW} \circ \bar{\sigma}(f^\ast))|\le n|\cW^c|\log|\bF|+\log|\Ker \bar{\sigma}(f^\ast)|$$
and 
$$\log|\Ker \bar{\sigma}(f^\ast)| \leq \log|\Ker (P_{\cW} \circ \sigma(f))|\le n|\cW^c|\log|\bF|+\log|\Ker \sigma(f)|.$$
By soficity, $|\cW^c|/d$ approaches to zero as $d$ gets large, it follows that
$$\hso(\hcM)=\hso(\cM)=\varlimsup_{i \to \infty} \frac{\log |\Ker \sigma_i(f)|}{d_i}.$$

Now for any finitely generated $\bF \Gamma$-module $\cM$,  write $\cM$ as $\cM=(\bF \Gamma)^n/\cup_{j \in \bN} \cN_j$ for some increasing sequence of finitely generated submodules $\{\cN_j\}_{j \in \bN}$ of $(\bF \Gamma)^n$. By \cite[Corollary 3.6]{GS15A} and \cite[Proposition 3.4(1)]{LL15A}, we have
$$\hso(\hcM)==\inf_j \hso(\widehat{(\bF \Gamma)^n/\cN_j})=\inf_j \hso((\bF \Gamma)^n/\cN_j) =\hso(\cM).$$
\end{proof}

Following the similar argument as in the proof of \cite[Lemma 10.4]{LL15A}, we also have an inequality for topological entropy.
\begin{lemma} \label{inequality}
Let $X$ be a compact metrizable abelian group carrying an action of $\Gamma$ by continuous automorphisms. Then for any closed $\Gamma$-invariant subgroup $Y$ of $X$, the induced quotient map $\pi: X \to X/Y$ satisfies
$$\hso(X) \geq \hso(X/Y|X)+\hso(Y).$$
\end{lemma}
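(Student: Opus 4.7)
The plan is to combine microstates using the abelian group structure of $X$. The key observation is that if $\varphi \colon [d] \to X$ is approximately equivariant and $\psi \colon [d] \to Y$ is approximately equivariant, then the pointwise sum $\varphi + \psi$ is approximately equivariant into $X$ with only a factor-of-two worsening of parameters, provided the pseudometrics used on $X$ and $Y$ are translation-invariant. From a maximal separated family for $X/Y$ relative to $X$ and a maximal separated family for $Y$, the products of such sums will produce a separated family for $X$ of the expected cardinality.

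First I would fix a translation-invariant compatible metric $\rho_X$ on $X$ (available because $X$ is a compact metrizable abelian group), take $\rho_Y$ to be its restriction to $Y$, and put
$$\rho_{X/Y}(\pi(x_1), \pi(x_2)) := \inf_{y \in Y} \rho_X(x_1 - x_2, y)$$
on $X/Y$. All three are translation-invariant, compatible and dynamically generating, and two properties are automatic from these choices: $\rho_{X/Y}(\pi(x_1), \pi(x_2)) \leq \rho_X(x_1, x_2)$, and $\rho_X(x + y, x + y') = \rho_Y(y, y')$ for $x \in X$, $y, y' \in Y$.

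Next, for fixed $\varepsilon > 0$, $F \in \cF(\Gamma)$, $\delta > 0$ and each $i$, I would choose $\Phi_i \subseteq \Map(\rho_X, F, \delta, \sigma_i)$ on which $\pi$ is injective and such that $\{\pi \circ \varphi : \varphi \in \Phi_i\}$ realizes $N_\varepsilon(\Map(\pi, \rho_X, F, \delta, \sigma_i), \rho_{X/Y, \infty})$, together with $\Psi_i \subseteq \Map(\rho_Y, F, \delta, \sigma_i)$ realizing $N_\varepsilon(\Map(\rho_Y, F, \delta, \sigma_i), \rho_{Y, \infty})$. I then claim the pointwise sums $\Omega_i := \{\varphi + \psi : \varphi \in \Phi_i, \psi \in \Psi_i\}$ lie in $\Map(\rho_X, F, 2\delta, \sigma_i)$ (translation invariance of $\rho_X$ together with the triangle inequality on $\rho_{X,2}$ yields the $2\delta$-bound), are $(\rho_{X, \infty}, \varepsilon)$-separated (if two sums are within $\varepsilon$, the first compatibility property forces $\pi \circ \varphi = \pi \circ \varphi'$ and then injectivity of $\pi$ on $\Phi_i$ forces $\varphi = \varphi'$; the second compatibility property then forces $\rho_{Y, \infty}(\psi, \psi') \leq \varepsilon$, hence $\psi = \psi'$), and so have cardinality $|\Phi_i| \cdot |\Psi_i|$. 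Taking $\log$, dividing by $d_i$ and passing to the ultralimit along $\omega$ yields
$$h^\varepsilon_{\Sigma, \omega, \infty}(\rho_X, F, 2\delta | \rho_X) \geq h^\varepsilon_{\Sigma, \omega, \infty}(\rho_{X/Y}, F, \delta | \rho_X) + h^\varepsilon_{\Sigma, \omega, \infty}(\rho_Y, F, \delta | \rho_Y),$$
after which $\inf_F \inf_\delta$ (the change $\delta \mapsto 2\delta$ does not affect the infimum) and $\sup_\varepsilon$ give the desired inequality $\hso(X) \geq \hso(X/Y|X) + \hso(Y)$.

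The main obstacle I anticipate is managing the edge cases in which one of the $\Map$ sets on the right-hand side is $\omega$-almost surely empty, so that the corresponding entropy equals $-\infty$; the stated inequality is then vacuous, but one has to verify that the microstate-addition argument degenerates consistently with the convention "finite $+\,(-\infty) = -\infty$" before taking the outer $\sup$ and $\inf$. The substantive content of the proof rests on translation invariance of all three pseudometrics and on the two compatibility relations, together with the freedom to choose $\Phi_i$ so that $\pi$ is injective on it (one preimage of each element of an optimal separated set in $\Map(\pi, \rho_X, F, \delta, \sigma_i)$), which together make the addition-of-microstates argument robust.
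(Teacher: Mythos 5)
Your proposal is correct and follows essentially the same route as the paper, which itself only points to the argument of \cite[Lemma 10.4]{LL15A}: fix translation-invariant compatible (pseudo)metrics, add a microstate for $X$ (whose pushforward realizes the relative count for $X/Y$) to a microstate for $Y$, and use the two compatibility relations to see that distinct pairs yield $(\rho_{X,\infty},\varepsilon)$-separated sums in $\Map(\rho_X,F,2\delta,\sigma_i)$. The only points you gloss over --- Minkowski's inequality for the $2\delta$ bound, the monotonicity in $\varepsilon$ needed to exchange $\sup_\varepsilon$ with the sum, and the $-\infty$ degeneracies you already flag --- are routine.
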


\begin{proposition} \label{general case}
Let $\bF$ be a finite field. Then for any countable $\bF \Gamma$-modules $\cM_1 \subseteq \cM_2$, we have $\hso(\widehat{\cM_1}|\widehat{\cM_2}) \leq \hso(\cM_1|\cM_2)$.

\end{proposition}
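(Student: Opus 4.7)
The plan is to reduce to the case where both modules are finitely generated, where the inequality follows by combining the addition formula (Theorem \ref{addition}) with Lemma \ref{inequality} and Proposition \ref{finitely generated case}; I then recover the general case by exhausting $\cM_1$ and $\cM_2$ by finitely generated submodules and applying the continuity properties in Proposition \ref{property} together with the behavior of topological entropy under the inverse limits produced by Pontryagin duality.

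For the base case, suppose $\cN \subseteq \cM$ are finitely generated $\bF\Gamma$-modules. Applying Lemma \ref{inequality} to $X = \hcM$ with the closed $\Gamma$-invariant subgroup $Y = \widehat{\cM/\cN}$, whose quotient is identified with $\widehat{\cN}$ under Pontryagin duality, yields
$$\hso(\hcM) \geq \hso(\widehat{\cN}|\hcM) + \hso(\widehat{\cM/\cN}).$$
Since both $\cM$ and $\cM/\cN$ are finitely generated, Proposition \ref{finitely generated case} identifies the left-hand side with $\hso(\cM)$ and the last term with $\hso(\cM/\cN)$, and Theorem \ref{addition} provides $\hso(\cM) = \hso(\cN|\cM) + \hso(\cM/\cN)$. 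Subtracting gives $\hso(\widehat{\cN}|\hcM) \leq \hso(\cN|\cM)$.

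Next I exhaust: fix increasing sequences $\cN_j \nearrow \cM_1$ and $\cM_2^{(k)} \nearrow \cM_2$ of finitely generated submodules with $\cM_2^{(k)} \supseteq \cN_k$. For $k \geq j$ the base case gives $\hso(\widehat{\cN_j}|\widehat{\cM_2^{(k)}}) \leq \hso(\cN_j|\cM_2^{(k)})$. Combining this with the monotonicity $\hso(\widehat{\cN_j}|\widehat{\cM_2}) \leq \hso(\widehat{\cN_j}|\widehat{\cM_2^{(k)}})$ (which follows because the $\Gamma$-equivariant surjection $\widehat{\cM_2} \twoheadrightarrow \widehat{\cM_2^{(k)}}$ sends approximately equivariant maps to approximately equivariant maps with the same image in $\widehat{\cN_j}$, so the counted family shrinks), then taking $\inf_{k \geq j}$ and applying Proposition \ref{property}(3) yields $\hso(\widehat{\cN_j}|\widehat{\cM_2}) \leq \hso(\cN_j|\cM_2)$.

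Finally, take $\sup_j$. The algebraic side collapses by Proposition \ref{property}(2) to $\hso(\cM_1|\cM_2)$. The topological side requires the inverse-limit continuity $\hso(\widehat{\cM_1}|\widehat{\cM_2}) \leq \sup_j \hso(\widehat{\cN_j}|\widehat{\cM_2})$, which I would establish by picking a dynamically generating pseudometric on $\widehat{\cM_1}$ of the form $\sum_j 2^{-j}\rho_j$ with each $\rho_j$ pulled back from $\widehat{\cN_j}$, so that for any $\varepsilon > 0$ two points $\varepsilon$-separated in $\widehat{\cM_1}$ are $\varepsilon'$-separated in some $\widehat{\cN_j}$; this dominates the $\varepsilon$-separated counts on $\widehat{\cM_1}$ by $\varepsilon'$-separated counts on $\widehat{\cN_j}$. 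This last step is the main obstacle: the base case and the monotonicity are direct consequences of already-cited results, but the metric bookkeeping needed to convert $\varepsilon$-separation in the inverse limit into separation at a finite stage must be handled with care.
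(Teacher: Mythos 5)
Your proposal is correct and follows essentially the same route as the paper: the finitely generated base case via Lemma \ref{inequality}, Proposition \ref{finitely generated case} and Theorem \ref{addition}, then exhaustion of $\cM_2$ using monotonicity in the second variable together with Proposition \ref{property}(3), and finally exhaustion of $\cM_1$ using Proposition \ref{property}(2). The inverse-limit separation argument you flag as the main obstacle is precisely the content the paper delegates to \cite[Lemma 9.10]{LL15A}, and your sketch (truncating $\sum_j 2^{-j}\rho_j$ at a level $J$ with $2^{-J}<\varepsilon/2$ so that the truncated pseudometric factors through $\widehat{\cN_J}$) is the standard way to carry it out.
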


\begin{proof}
{\bf Case 1}. $\cM_1 \subseteq \cM_2$ are finitely generated.

 Combining Lemma \ref{inequality}, Proposition \ref{finitely generated case}, and Theorem \ref{addition}, we have
$$h_\Sigma(\widehat{\cM_1}|\widehat{\cM_2})\leq \hso(\widehat{\cM_2})-\hso(\widehat{\cM_2/\cM_1})=\hso(\cM_2)-\hso(\cM_2/\cM_1)= \hso(\cM_1|\cM_2).$$

{\bf Case 2}. $\cM_1$ is finitely generated.

Write $\cM_2$ as the union of an increasing sequence of finitely generated $\bF \Gamma$-modules $\{\cN_j\}_{j \in \bN}$ of $\cM_2$ containing $\cM_1$. It is clear that by definition, for all $j \in \bN$, one has
$$\hso(\widehat{\cM_1}|\widehat{\cM_2})\leq \hso(\widehat{\cM_1}|\widehat{\cN_j}).$$
Thus  by Proposition \ref{property} and Case 1, we have
$$\hso(\widehat{\cM_1}|\widehat{\cM_2})\leq \lim_{j \to \infty} \hso(\widehat{\cM_1}|\widehat{\cN_j})\leq \lim_{j \to \infty} \hso(\cM_1|\cN_j)=\hso(\cM_1|\cM_2).$$

For general case, write $\cM_1$ as the union of an increasing sequence of finitely generated submodules $\{\cN_j\}_{j \in \bN}$ of $\cM_1$. Following the similar argument as in the proof of \cite[Lemma 9.10]{LL15A}, we have
$$\hso(\widehat{\cM_1}|\widehat{\cM_2})=\lim_{j \to \infty} \hso(\widehat{\cN_j}|\widehat{\cM_2}).$$
Thus by Proposition \ref{property} and Case 2, we have
$$\hso(\widehat{\cM_1}|\widehat{\cM_2}) \leq \lim_{j \to \infty} \hso(\cN_j|\cM_2) =\hso(\cM_1|\cM_2).$$
\end{proof}



\begin{thebibliography}{99}
\Small

\bibitem{BK16}
L.~Bartholdi and D.~ Kielak, Amenability of groups is characterized by Myhill's Theorem. {\it J. Eur. Math. Soc.} to appear. arXiv:1605.09133v2.

\bibitem{Bowen10}
L. ~Bowen. Measure conjugacy invariants for actions of countable sofic groups. {\it J. Amer. Math. Soc.} {\bf 23} (2010), no.1, 217--245.

\bibitem{CL15B}
V. ~Capraro and M. ~Lupini. {\it Introduction to sofic and hyperlinear groups and Connes' embedding conjecture.} Lecture Notes in Math., 2136. Springer, Berlin, 2015.


\bibitem{CC10B}
T.~Ceccherini-Silberstein and M.~Coornaert. {\it Cellular automata and groups}. Springer Monographs in Mathematics. Springer-Verlag, Berlin, 2010.

 \bibitem{CL15}
 N.-P.~Chung and H.~Li. Homoclinic groups, IE groups, and expansive algebraic actions. {\it Invent. Math.} {\bf 199} (2015), no. 3, 805--858.
 
 \bibitem{CT15}
N.-P.~Chung and A.~Thom. Some remarks on the entropy for algebraic actions of amenable groups.  {\it Trans. Amer. Math. Soc.} {\bf 367} (2015), no.12, 8579--8595.
 
 \bibitem{GS15A}
D. ~Gaboriau and B.~ Seward. Cost, $\ell^2$-Betti numbers and the sofic entropy of some algebraic actions.  {\it J. Anal. Math.} to appear. arXiv:1509.02482.

 \bibitem{Gromov99S}
M.~Gromov. Endomorphisms of symbolic algebraic varieties. {\it J. Eur.\ Math.\ Soc.} {\bf 1} (1999), no. 2, 109--197.
 
 
 \bibitem{Hayes15}
B.~Hayes. Mixing and spectral gap relative to Pinsker factors for sofic groups. Proceedings of the 2014 Maui and 2015 Qinhuangdao conferences in honour of Vaughan F. R. Jones' 60th birthday, 193--221,  {\it Proc. Centre Math. Appl. Austral. Nat. Univ., 46, Austral. Nat. Univ.}, Canberra, 2017. 

 \bibitem{KL11}
D. ~Kerr and H. ~Li. Entropy and the variational principle for actions of sofic groups. {\it Invent. Math.} {\bf 186} (2011), no. 3, 501--558.

\bibitem{KLB}
D.~Kerr and H.~Li. {\it Ergodic Theory: Independence and Dichotomies.} Springer Monographs in Mathematics. Springer, Cham, 2016.

\bibitem{LL15A}
H.~Li and B.~Liang. Sofic mean length. arXiv:1510.07655v1. 

\bibitem{LT14}
H.~Li and A.~Thom. Entropy, determinants, and $L^2$-torsion. {\it J. Amer. Math. Soc.} {\bf 27} (2014), no. 1, 239--292.

\bibitem{Peters79}
J.~Peters. Entropy on discrete abelian groups. {\it Adv. Math.} {\bf 33} (1979), no. 1, 1--13.

\bibitem{Schmidt95}
K.~Schmidt. {\it Dynamical systems of algebraic origin}. Progress in Mathematics, 128. Birkh\"{a}user Verlag, Basel, 1995.

\bibitem{SewardII}
B.~Seward. Krieger's finite generator theorem for actions of countable groups II.  arXiv:1501.03367v3.

\bibitem{Virili14A}
S.~Virili. Length functions of Grothendieck categories with applications to infinite group representations. arXiv:1410.8306v1.

\bibitem{Weiss74}
M.~D.~Weiss, Algebraic and other entropies of group endomorphisms, {\it Math. Systems Theory} {\bf 8} (1974/75) no.3, 243--248.

\bibitem{Weiss99}
B.~Weiss. Sofic groups and dynamical systems. In: {\it Ergodic Theory and Harmonic Analysis (Mumbai, 1999)}. {\it Sankhy\={a} Ser. A} {\bf 62} (2000), no. 3, 350--359.

\end{thebibliography}
\end{document}